\newcommand{\vol}{{\rm vol}}
\newtheorem{theorem}{Theorem}
\newtheorem*{thmA}{Theorem A}
\newtheorem*{thmB}{Theorem B}
\newtheorem{coro}{Corollary}
\newtheorem{prop}{Proposition}
\newtheorem{lema}{Lemma}
\newtheorem*{defi}{Definition}
 \theoremstyle{remark}
\begin{document}
\title[Lower bound for the volume of unit vector fields on punctured spheres]{Poincar\'e index and the volume functional of unit vector fields on punctured spheres}
\author{Fabiano G. B. Brito}

\author{Andr\'e O. Gomes}

\author{Icaro Gon\c {c}alves}
\thanks{The third author is supported by a scholarship from the National Postdoctoral Program, PNPD-CAPES}

\address{Centro de Matem\'atica, Computa\c{c}\~ao e Cogni\c{c}\~ao,
Universidade Federal do ABC,
09.210-170 Santo Andr\'e, Brazil}
\email{fabiano.brito@ufabc.edu.br}

\address{Dpto. de Matem\'{a}tica, Instituto de Matem\'{a}tica e Estat\'{i}stica,
Universidade de S\={a}o Paulo, R. do Mat\={a}o 1010, S\={a}o Paulo-SP
05508-900, Brazil.}
\email{gomes@ime.usp.br}

\address{Dpto. de Matem\'{a}tica, Instituto de Matem\'{a}tica e Estat\'{i}stica,
Universidade de S\={a}o Paulo, R. do Mat\={a}o 1010, S\={a}o Paulo-SP
05508-900, Brazil.}
\email{icarog@ime.usp.br}

\subjclass[2010]{57R25, 53C20, 57R20, 53C43}
%57R25 Vector fields, frame fields
%47H11 Degree Theory
%58E20 Harmonic maps
%53C20 Global Riemannian geometry, including pinching
%57R20 Characteristic classes and numbers
%53C43 Differential geometric aspects of harmonic maps

%%%%%%%%%%%%%%%%%%%%%%%%%%%%%%%%%%%%%%%%%%%%%%%%%%%%%%%%%%%%%%%%%%%%%%%%%%%%%%%%%%%%%%%%%%%%%%%%%%%%%%%%%%%%%%%%%%%%%%%%%%
\begin{abstract}
For $n\geq 1$, we exhibit a lower bound for the volume of a unit vector field on $\mathbb{S}^{2n+1}\backslash\{\pm p\}$ depending on the absolute values of its Poincar\'e indices around $\pm p$. We determine which vector fields achieve this volume, and discuss the idea of having multiple isolated singularities of arbitrary configurations. 
\end{abstract}
\maketitle
%%%%%%%%%%%%%%%%%%%%%%%%%%%%%%%%%%%%%%%%%%%%%%%%%%%%%%%%%%%%%%%%%%%%%%%%%%%%%%%%%%%%%%%%%%%%%%%%%%%%%%%%%%%%%%%%%%%%%%%%%%
\section{Introduction and statement of the results}
Let $M^m$ be a closed oriented Riemannian manifold and $\vec{v}$ a unit vector field on $M$. If $T^1M$ denotes the unit tangent bundle, endowed with the Sasaki metric, and regarding $\vec{v}: M \to T^1M$ as a smooth section,  the volume of $\vec{v}$ is defined as the volume of the submanifold $\vec{v}(M)\subset T^1M$, 
$$
\vol(\vec{v}) = \vol(\vec{v}(M)).
$$

On a given orthonormal local frame $\{e_1, \dots e_m\}$, there exists a formula (see \cite{GZ} and \cite{J}) in terms of the Riemannian metric of $M$. It reads
\begin{eqnarray}
\label{volume}
\vol(\vec{v}) &=& \int_M\sqrt{\det({\rm Id} + (\nabla \vec{v})^* (\nabla \vec{v}))}\nu\nonumber\\
&=& \int_M \Big(1+\sum_A \|\nabla_{e_A} \vec{v} \|^2  + \sum_{A<B} \| \nabla_{e_A} \vec{v}\wedge \nabla_{e_B} \vec{v} \|^2 + \cdots \nonumber \\
&\cdots& + \sum_{A_1<\cdots <A_{m-1}}\|\nabla_{e_{A_1}} \vec{v}\wedge \cdots \wedge \nabla_{e_{A_{m-1}}} \vec{v}  \|^2 
\Big)^{\frac{1}{2}}\nu, 
\end{eqnarray}
where $\nabla \vec{v}$ is an endomorphism of the tangent space at a given point, $\nu$ is the volume form of $M$ and $(\nabla \vec{v})^*$ is denotes adjoint operator.  
Intuitively speaking, the idea behind this functional is to measure which unit vectors are visually best organized, in the sense that those vectors would attain minimum possible value, \cite{GZ}. It is always true that $\vol(\vec{v})\geq \vol(M)$, and equality holds if and only if $\vec{v}$ is parallel with respect to $\nabla$. What makes worth looking for a minimum (or an infimum) for the volume is that not always a Riemannian manifold admits globally defined parallel vector fields, so in most cases the most symmetric organized unit vector field is not a trivial one, but rather a distinguished vector field. 

When Gluck and Ziller defined the volume functional, they proved that
\begin{theorem}[\cite{GZ}] The unit vector fields of minimum volume on $\mathbb{S}^3$ are precisely the Hopf vector fields, and no others.
\end{theorem}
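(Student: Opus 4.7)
The plan is to pass to an orthonormal frame adapted to $\vec v$, derive a pointwise algebraic lower bound on the integrand of \eqref{volume}, and arrange matters so that this bound integrates over $\Sfe$ to the Hopf volume $4\pi^{2}$. About each point, I would choose a local orthonormal frame $\{e_1,e_2,e_3=\vec v\}$. Setting $a_{ij}=\langle \nabla_{e_i}\vec v,e_j\rangle$, the constraint $|\vec v|=1$ forces $a_{i3}=0$, so $\nabla \vec v$ is encoded in the $3\times 2$ matrix $(a_{ij})$. Formula \eqref{volume} for $m=3$ becomes
\[
F=\sqrt{1+\textstyle\sum_{i,j}a_{ij}^{2}+M_{12}^{2}+M_{13}^{2}+M_{23}^{2}},\qquad M_{AB}=a_{A1}a_{B2}-a_{A2}a_{B1}.
\]
Evaluating on a Hopf field (for which $\nabla_X H=X\times H$ in the quaternionic structure of $\Sfe$) gives $F\equiv 2$, hence $\vol(H)=4\pi^{2}$, which fixes the target lower bound.

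Next I would seek a pointwise bound of the shape $F\ge 1+\varphi$, with $\varphi\ge 0$ and $\varphi\equiv 1$ on Hopf fields. A natural first step combines the AM-GM inequalities $a_{11}^{2}+a_{22}^{2}\ge 2|a_{11}a_{22}|$ and $a_{12}^{2}+a_{21}^{2}\ge 2|a_{12}a_{21}|$ with $|a_{11}a_{22}|+|a_{12}a_{21}|\ge |M_{12}|$ and $1+M_{12}^{2}\ge 2|M_{12}|$, giving $F\ge 1+|M_{12}|$. The residues $a_{31}^{2}+a_{32}^{2}$ and $M_{13}^{2}+M_{23}^{2}$ are dropped as nonnegative; their vanishing at equality will enforce $\nabla_{\vec v}\vec v=0$. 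One may need to refine $\varphi$ by incorporating $\langle\mathrm{curl}\,\vec v,\vec v\rangle=a_{12}-a_{21}$ in order for the saturation condition to pin down Hopf uniquely (for instance, the naive bound already saturates at $a_{11}=a_{22}=1$ with everything else zero, which is geometrically inadmissible on $\Sfe$ but not immediately ruled out algebraically).

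The main obstacle, and the core of the argument, is the global step: identifying a $\varphi$ for which $\int_{\Sfe}\varphi\,\nu$ is independent of $\vec v$ and equals $2\pi^{2}$. I would approach this via calibration on the unit tangent bundle $T^{1}\Sfe$ endowed with the Sasaki metric, namely by exhibiting a closed, comass-one $3$-form $\omega$ on $T^{1}\Sfe$ whose pullback under any unit section is $(1+\varphi)\,\nu$. Since all sections of $T^{1}\Sfe\to\Sfe$ are homologous, Stokes' theorem forces $\int_{\vec v(\Sfe)}\omega$ to assume the common value $4\pi^{2}$, realized by Hopf. The characterization of minimizers would then follow by saturating the pointwise inequality and the residue terms: $a_{3j}=0$ forces $\nabla_{\vec v}\vec v=0$, while the AM-GM equalities together with $M_{12}\equiv \mathrm{const}$ force $\nabla\vec v$ to act on $\vec v^{\perp}$ as $\pm \pi/2$-rotation, which are precisely the structural equations of a Hopf vector field on $\Sfe$.
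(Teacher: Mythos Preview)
The paper does not contain a proof of this theorem: it is quoted as background from Gluck and Ziller \cite{GZ} and used only for context in the Introduction, so there is no ``paper's own proof'' to compare your proposal against.

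For what it is worth, your sketch is in the spirit of the original Gluck--Ziller argument: they, too, work on $T^{1}\mathbb{S}^{3}$ with the Sasaki metric and exhibit a closed calibrating $3$-form of comass one for which the Hopf sections are calibrated. Your identification of the ``main obstacle'' is accurate; the pointwise inequality $F\ge 1+|M_{12}|$ by itself is insufficient, and the actual work in \cite{GZ} lies precisely in constructing the calibration and verifying its comass, neither of which you carry out. So as a proof this is incomplete, but as a plan it is pointed in the right direction --- just not something the present paper undertakes or needs.
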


Contrary to what the reader might expect, Hopf vector fields fail to minimize the volume functional in higher dimensional spheres,
\begin{theorem}[\cite{J}] Hopf fibrations on the round sphere $\mathbb{S}^5$ are not local minima of the volume functional. 
\end{theorem}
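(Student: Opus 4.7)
The plan is to perform a second variation (index) analysis at the Hopf vector field and to exhibit an explicit direction along which volume decreases. Let $\vec{h}$ denote a standard Hopf unit vector field on $\mathbb{S}^{5}$. First I would record the Euler--Lagrange equation associated with the functional (\ref{volume}) restricted to unit sections, and verify --- either by invoking the large symmetry group acting transitively on the space of Hopf fields or by a direct computation in a Hopf-adapted frame --- that $\vec{h}$ is a critical point of the volume.

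Next, for a smooth section $W$ of the orthogonal complement $\vec{h}^{\perp}\subset T\mathbb{S}^{5}$, I would consider a path of unit vector fields
\begin{equation*}
\vec{v}_{t} \;=\; \cos(t\|W\|)\,\vec{h} \;+\; \sin(t\|W\|)\,\frac{W}{\|W\|},
\end{equation*}
substitute it into (\ref{volume}), and expand to second order in $t$. The first variation vanishes by the previous step, and the coefficient of $t^{2}$ gives a quadratic form $Q(W)$ on sections of $\vec{h}^{\perp}$. On $\mathbb{S}^{5}$ the covariant derivative $\nabla\vec{h}$ in a Hopf-adapted orthonormal frame encodes the complex structure of the horizontal distribution of the Hopf fibration, so $Q(W)$ can in principle be written in closed form as an integral involving $\|W\|^{2}$, $\|\nabla W\|^{2}$, and mixed contractions of $\nabla W$ with $\nabla\vec{h}$.

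The last step is to exhibit an explicit section $W\in\Gamma(\vec{h}^{\perp})$ with $Q(W)<0$. A natural strategy is to decompose $\vec{h}^{\perp}$ into its horizontal and vertical parts relative to the Hopf fibration, write $W$ in terms of an equivariant basis built from low-degree spherical harmonics, evaluate $Q$ mode by mode, and show that at least one mode --- heuristically one that ``twists'' the horizontal component of $\vec{h}$ using a first eigenfunction of the Laplacian --- contributes with a negative sign. Alternatively, one may try a compactly supported bump variation in a normal chart and estimate the sign of $Q(W)$ from the leading scaling behaviour as the support shrinks. The principal obstacle is the bookkeeping: because $m=5$, the integrand in (\ref{volume}) comes from the expansion of a $5\times 5$ determinant, so the second-order expansion in $t$ contains terms quadratic in $\nabla W$ coupled to up to four factors of $\nabla\vec{h}$. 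Organizing these contributions in a Hopf-adapted frame so that the higher-order terms in $\nabla\vec{h}$ collapse, and then singling out a test field that picks out a negative eigenmode of the resulting quadratic form, is what genuinely distinguishes the five-dimensional case from the three-dimensional one, where the same form is positive semidefinite.
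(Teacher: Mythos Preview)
The paper does not prove this statement; it is quoted from \cite{J} purely as background, so there is no argument in the present paper to compare your proposal against.

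Assessed on its own, your outline correctly identifies the second-variation route, and that is indeed how the result is established in the literature. But the proposal stops short of a proof: the decisive step---producing a concrete section $W\in\Gamma(\vec{h}^{\perp})$ and verifying $Q(W)<0$---is only described as ``a natural strategy'' (or ``alternatively'' a bump construction), with the honest admission that ``the principal obstacle is the bookkeeping''. Everything preceding that step (criticality of $\vec{h}$ by symmetry, the general shape of the Hessian $Q$) is routine; the entire content of the theorem lies in the computation you have left open. Until a specific $W$ is written down and $Q(W)$ is evaluated, what you have is a plan, not a proof.

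One further warning: the ``alternative'' of a compactly supported bump variation on a shrinking ball is almost certainly a dead end. The integrand in (\ref{volume}) is bounded below by $1$, the Hopf field is locally maximally symmetric, and the instability on $\mathbb{S}^{5}$ is a global spectral phenomenon driven by low-frequency modes, not a local scaling effect; a localized perturbation has no reason to detect it. If you pursue this, stay with the global equivariant/harmonic-mode ansatz and carry the Hessian computation through on an explicit low-degree section.
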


In pursuit of unit vector fields of minimum volume, several constructions stumbled on spheres minus one or minus a couple of points. One must keep in mind the following two examples, both of them defined on punctured spheres. 

The first example was given by Pedersen in \cite{P}, defined on a sphere minus one point. We denote it by $V_P$. It was shown in \cite{P} that its volume is 
$$
\vol(V_P) = \sqrt{2\pi n}\ \vol(\mathbb{S}^{2n+1}),
$$
for $n\geq 1$. The second example is a radial vector field on $\mathbb{S}^{2n+1}\backslash\{\pm p\}$. This vector field, denoted by $V_R$, is a geodesic vector field coming from the exponential map of the sphere at $p$. Brito {\it et al} proved the following
\begin{theorem}[\cite{BCN}] \label{BCN} Let $\vec{v}$ be a unit vector field on a compact Riemannian and oriented manifold $M^{2n+1}$. Then 
$$
\vol(\vec{v})\geq \int_M \left(\sum_{k=0}^{n} {n \choose k}{2n \choose 2k}^{-1} |\sigma_{2k}(\vec{v}^{\perp})| \right)\nu,
$$
where $\sigma_{2k}(\vec{v}^{\perp})$ is the $2k$-th elementary symmetric function of the second fundamental form of the distribution orthogonal to $\vec{v}$ (that is not necessarily integrable), with $\sigma_{0} = 1$. When $n\geq 2$, equality holds if and only if $\vec{v}$ is totally geodesic and $\vec{v}^{\perp}$ is integrable and umbilic. Furthermore, the following holds, 

(a) For every unit vector field $\vec{v}$ on $\mathbb{S}^{2n+1}$, 
$$
\vol(\vec{v}) \geq \sum_{k=0}^{n} {n \choose k}^2{2n \choose 2k}^{-1} \vol(\mathbb{S}^{2n+1}),
$$

and for $n\geq 2$ none of them achieves equality. 

(b) Let $\vec{v}$ be any non-singular unit vector field on $\vol(\mathbb{S}^{2n+1})$, then $\vol(V_R)\leq \vol(\vec{v})$. 
\end{theorem}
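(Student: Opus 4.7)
The plan is to reduce the bound to a pointwise algebraic estimate on the integrand of \eqref{volume}, then specialize to the round sphere for (a) and exploit a saturation property of $V_R$ for (b). Fix $p \in M$ and view $L = \nabla\vec{v}|_p$ as an endomorphism of $T_pM$. Since $|\vec{v}|=1$, the image of $L$ lies in $\vec{v}^{\perp}$, so $L$ has rank at most $2n$, and we may decompose the restriction $L|_{\vec{v}^{\perp}} = S + A$ with $S$ self-adjoint (whose spectrum defines $\sigma_{2k}(\vec{v}^{\perp})$) and $A$ skew-adjoint. Denoting the singular values of $L$ by $\lambda_1, \ldots, \lambda_{2n}$, the integrand of \eqref{volume} equals $\prod_i\sqrt{1+\lambda_i^2}$, and the heart of the argument is the pointwise estimate
\[
\prod_{i=1}^{2n}\sqrt{1+\lambda_i^2} \;\geq\; \sum_{k=0}^{n}\binom{n}{k}\binom{2n}{2k}^{-1}|\sigma_{2k}(S)|.
\]

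This algebraic inequality is where I expect the principal difficulty to lie. One would expand $\det(\mathrm{Id}+L^{*}L)=\sum_k e_k(\lambda_1^2,\dots,\lambda_{2n}^2)$, identify each $e_{2k}$ with a sum of $2k\times 2k$ principal minors of $L^{*}L$, and bound these minors from below by suitable traces of powers of $S$; the factor $\binom{2n}{2k}^{-1}$ enters by averaging over the $\binom{2n}{2k}$ size-$2k$ index subsets, while the identity $\|L\|_F^2=\|S\|_F^2+\|A\|_F^2$ ensures that the skew part can only help. The square root is handled by concavity together with a termwise Cauchy--Schwarz step. The equality case then forces $A\equiv 0$ (so $\vec{v}^{\perp}$ is integrable and umbilic) and $\nabla_{\vec{v}}\vec{v}\equiv 0$ (so $\vec{v}$ is totally geodesic) whenever $n\geq 2$.

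For part (a), I would show that on $\mathbb{S}^{2n+1}$ the integrals $\int\sigma_{2k}(\vec{v}^{\perp})\nu$ take the universal value $\binom{n}{k}\vol(\mathbb{S}^{2n+1})$ for every smooth unit vector field. This follows from divergence identities for the Newton transformations $T_{2k-1}$ of $S$: on a space of constant curvature $1$, the Gauss equation for the (possibly non-integrable) distribution $\vec{v}^{\perp}$ expresses $\mathrm{div}(T_{2k-1}(\vec{v}))$ as a linear combination of $\sigma_{2k}$ and lower $\sigma_{2j}$, and an induction on $k$ starting from $\sigma_0=1$ fixes the constants. Combining with the main inequality and $|\sigma_{2k}|\geq\sigma_{2k}$ delivers the bound in (a). Equality for $n\geq 2$ is ruled out because it would require $\vec{v}^{\perp}$ to be an integrable umbilic foliation of the whole sphere; every totally umbilic hypersurface of $\mathbb{S}^{2n+1}$ is a geodesic sphere, and such a foliation would have to collapse to points, contradicting the non-singularity of $\vec{v}$.

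For part (b), $V_R$ saturates the pointwise inequality wherever defined: being tangent to radial geodesics gives $\nabla_{V_R}V_R=0$, and $V_R^{\perp}$ is the tangent distribution of the geodesic spheres centered at $p$, which are umbilic, so $A=0$ and $\sigma_{2k}(V_R^{\perp})\geq 0$ for every $k$. Since $\{\pm p\}$ has measure zero, $\vol(V_R)=\int\sum_k\binom{n}{k}\binom{2n}{2k}^{-1}\sigma_{2k}(V_R^{\perp})\nu$, and for any non-singular $\vec{v}$ the chain
\[
\vol(\vec{v})\;\geq\;\int\sum_k\binom{n}{k}\binom{2n}{2k}^{-1}|\sigma_{2k}(\vec{v}^{\perp})|\nu\;\geq\;\sum_k\binom{n}{k}\binom{2n}{2k}^{-1}\Big|\int\sigma_{2k}(\vec{v}^{\perp})\nu\Big|\;=\;\vol(V_R)
\]
completes the proof, the last equality using the universal integral value from (a), extended to include $V_R$ by a removable-singularities argument at $\{\pm p\}$.
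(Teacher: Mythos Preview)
The paper does not prove this theorem; it is quoted from \cite{BCN} as background, so there is no in-paper proof to compare against. What the present paper does reproduce from \cite{BCN} (see Subsection~\ref{inequalities}) is the mechanism behind the key pointwise inequality: one first proves the ``Fundamental Lemma'' $\vol(D)\geq\sum_{k}\binom{n}{k}\binom{2n}{2k}^{-1}\sigma_{2k}(D)$ for a \emph{diagonal} matrix with nonnegative entries, then upper-triangularizes $\nabla\vec{v}$ (complex eigenvalues in $2\times 2$ blocks), and observes that passing to the diagonal of absolute values can only decrease $\vol(\cdot)$ while it dominates the $\sigma_{2k}$ of the original matrix.

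Your outline differs from this, and the place where it breaks is exactly the step you flagged as the principal difficulty. First, a definitional issue: $\sigma_{2k}(\vec{v}^{\perp})$ is the $2k$-th coefficient of the characteristic polynomial of the full (generally non-symmetric) matrix $(a_{ij})$, not of its symmetric part $S$. These do not agree when $A\neq 0$ (e.g.\ a pure skew $2\times 2$ block has $\sigma_2=1$ but $\sigma_2(S)=0$), so the target of your inequality is misidentified. Second, the proposed route to the inequality---expanding $\det(\mathrm{Id}+L^{*}L)$, ``bounding minors from below by traces of powers of $S$'', averaging to produce $\binom{2n}{2k}^{-1}$, and then ``handling the square root by concavity and Cauchy--Schwarz''---is not a proof but a wish list; no concrete chain of inequalities is given, and concavity of the square root goes the wrong way for a lower bound. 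The actual argument in \cite{BCN} avoids all of this by reducing to the diagonal case.

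Your plan for (a), via divergence identities for the Newton tensors on a space of constant curvature $1$, is the correct idea and matches \cite{BCN}. For (b), the chain you wrote is fine in spirit, but the ``removable-singularities'' step deserves care: the universal integral identity for $\int\sigma_{2k}$ is proved by integrating a divergence, and one must check that the boundary contributions on small spheres around $\pm p$ vanish in the limit before equating $\vol(V_R)$ with $\sum_k\binom{n}{k}^{2}\binom{2n}{2k}^{-1}\vol(\mathbb{S}^{2n+1})$.
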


Besides, singular unit vector fields on $\mathbb{S}^2$ and the influence of the radius of a given sphere on the volume of Hopf vector fields have been studied, \cite{bo-gil} and \cite{bo-gil2}.

It can be shown that $\vol(V_R) = \frac{4^n}{{2n \choose n}}\vol(\mathbb{S}^{2n+1})$ (for example, see \cite{BCN}). Together with the value computed in \cite{GZ} for Hopf vector fields, $\vol(V_H) = 2^n\vol(\mathbb{S}^{2n+1})$, one is able to summarize some inequalities
$$
\vol(\mathbb{S}^{2n+1}) < \vol(V_R) < \vol(V_P) \ll \vol(V_H), 
$$
whenever $n\geq 2$. 

In addition, there are examples of how the topology of a vector field and the topology of the ambient space influence the volume. For Riemannian manifolds of dimension 5, Brito and Chac\'on \cite{BC} exhibited an inequality comparing the volume of a vector field to the Euler class of its orthonormal distribution. For Euclidean hypersurfaces, Reznikov \cite{R} deduced an inequality taking into account the degree of the Gauss map of the hypersurface. 

On the other hand, for antipodally punctured spheres of low dimensions, there is a relation regarding the index of the vector at the points $N = p$ and $S = -p$, 
\begin{theorem}[\cite{BCJ}] Let $\vec{v}$ be a unit smooth vector field defined on $\mathbb{S}^m\backslash\{N,S \}$. Then 

(a) for $m=2$, $\vol(\vec{v})\geq \frac{1}{2} (\pi + |I_{\vec{v}}(N)| + |I_{\vec{v}}(S)| -2)\vol(\mathbb{S}^2)$,

(b) for $m=3$, $\vol(\vec{v})\geq (|I_{\vec{v}}(N)| + |I_{\vec{v}}(S)|)\vol(\mathbb{S}^3)$,

\noindent
where $I_{\vec{v}}(P)$ stands for the Poincar\'e index of $\vec{v}$ around $P$. 
\end{theorem}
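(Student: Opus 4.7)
My plan is to combine an elementary pointwise bound on the volume integrand with a slice-wise degree argument in geodesic polar coordinates around each puncture. I focus on part (b), where the computation is cleanest, and then indicate how (a) follows the same scheme with an additional one-dimensional estimate.

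For (b), note that $\nabla\vec v$ has image in $\vec v^\perp$, so at every point it has at most two nontrivial singular values $\sigma_1\geq\sigma_2\geq 0$. Formula \eqref{volume} reduces to
\begin{equation*}
\sqrt{\det\bigl({\rm Id}+(\nabla\vec v)^*(\nabla\vec v)\bigr)} = \sqrt{(1+\sigma_1^2)(1+\sigma_2^2)},
\end{equation*}
and the algebraic identity $(1+\sigma_1^2)(1+\sigma_2^2) = (1+\sigma_1\sigma_2)^2 + (\sigma_1-\sigma_2)^2$ yields the clean pointwise lower bound $1+\sigma_1\sigma_2$. Integrating produces $\vol(\vec v)\geq \vol(\Sfe) + \int_M \sigma_1\sigma_2\,\nu$, and a direct check on $V_R$ (where $\nabla V_R = \cot(r)\,{\rm Id}_{\vec v^\perp}$, so the integrand equals $\csc^2 r$) confirms this inequality is sharp on the radial model.

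The second step converts $\int_M \sigma_1\sigma_2\,\nu$ into a topological lower bound via the Poincar\'e indices. In geodesic polar coordinates $(r,\omega)\in(0,\pi)\times\mathbb{S}^2$ centred at $N$, Fubini gives $\int_0^\pi\int_{\mathbb{S}^2_r}\sigma_1\sigma_2\,dA_{\mathbb{S}^2_r}\,dr$. On each slice $\sigma_1\sigma_2$ equals the supremum of $\|\nabla_u\vec v\wedge\nabla_w\vec v\|$ over orthonormal pairs $u,w$, so it dominates the 2-dimensional area-Jacobian of the map $\Phi_r:\mathbb{S}^2_r\to\mathbb{S}^2$ obtained by composing $\vec v|_{\mathbb{S}^2_r}$ with parallel transport back to $T_N\Sfe\cong\mathbb{R}^3$. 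By the very definition of the Poincar\'e index, $\Phi_r$ has degree $I_{\vec v}(N)$, so the standard degree--area inequality $\int_{\mathbb{S}^2_r}|J\Phi_r|\,dA\geq|I_{\vec v}(N)|\vol(\mathbb{S}^2)$ produces a slice-wise estimate. Running the argument symmetrically from $S$, and invoking Poincar\'e--Hopf $\chi(\Sfe)=0$ (which forces $|I_{\vec v}(N)|=|I_{\vec v}(S)|$), assembles the desired bound. Part (a) follows the same recipe with a single singular value $\sigma$: the pointwise bound is linear, $\sqrt{1+\sigma^2}\geq a+b\sigma$ for suitable $a^2+b^2\leq 1$, and the extra $\pi/2$ constant is produced by a one-dimensional analysis of $\vec v$'s trajectories in the $\mathbb{S}^1$-fibres of $T^1\Sfe^2$, whose minimal radial length is controlled by $|I_{\vec v}(P)|-1$ at each puncture.

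The principal obstacle is calibrating the comparison between the intrinsic $\sigma_1\sigma_2$ (built from the Levi-Civita covariant derivative) and the coordinate Jacobian $|J\Phi_r|$ (built from the parallel-transported section): they differ by connection one-forms of $\Sfe$, so the slice inequality is not literally pointwise. Making it hold in the integrated sense — most naturally via an integration by parts in the radial variable, or by selecting a trivialisation adapted to the radial foliation so that the Christoffel corrections become explicit — is the delicate computation that must terminate precisely on the constant $(|I_{\vec v}(N)|+|I_{\vec v}(S)|)\vol(\Sfe)$, with equality recovered on $V_R$.
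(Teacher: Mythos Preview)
First, note that the theorem you are attempting is not proved in this paper at all: it is quoted from \cite{BCJ} as motivation, and the present paper's own contribution (Theorem~A) is a higher-dimensional analogue whose $n=1$ specialization gives only $\tfrac{\pi}{4}\vol(\mathbb{S}^{2})(|I_{\vec v}(N)|+|I_{\vec v}(S)|)=\tfrac{1}{2}\vol(\mathbb{S}^{3})(|I_{\vec v}(N)|+|I_{\vec v}(S)|)$, i.e.\ half of the \cite{BCJ} bound. So there is no ``paper's own proof'' of this statement to compare against; the closest available comparison is with the Euler-class technique of Sections~2--3.

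That said, your proposal has a genuine gap precisely at the point you flag as the ``principal obstacle'', and it is more serious than a technical nuisance. Your slice inequality $\sigma_1\sigma_2\geq |J\Phi_r|$ is \emph{false} for the very model $V_R$ on which the theorem is sharp: there $\sigma_1\sigma_2=\cot^2 r$ while $|J\Phi_r|=\csc^2 r=1+\cot^2 r$, so the inequality goes the wrong way everywhere. Consequently, if your slice estimate \emph{did} hold, integrating $|J\Phi_r|\geq |I_{\vec v}(N)|\vol(\mathbb{S}^2)/\mathrm{area}(\mathbb{S}^2_r)$ over $r\in(0,\pi)$ and adding your $\vol(\mathbb{S}^3)$ term would yield $\vol(\vec v)\geq (|I_{\vec v}(N)|+|I_{\vec v}(S)|+1)\vol(\mathbb{S}^3)$, which $V_R$ violates. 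The connection correction is therefore not a perturbation to be integrated away: it contributes with a definite sign and magnitude (exactly $+1$ in the $V_R$ computation), and the whole content of the proof lies in tracking it. Your sketch for part~(a) is vaguer still and does not indicate how the constant $\pi-2$ would emerge.

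By contrast, the technique the paper develops for Theorem~A sidesteps the trivialisation problem entirely. Instead of parallel-transporting $\vec v$ to build a map $\Phi_r$, it works with the Euler form $\mathcal{E}(\vec v^{\perp})$ of the normal bundle, which is a \emph{closed} $2n$-form built intrinsically from $\Omega^{\perp}$. Closedness plus Stokes makes $\int_{\mathbb{S}^{2n}_\theta}\iota^*\mathcal{E}(\vec v^\perp)$ independent of $\theta$ and equal to the index in the limit $\theta\to\pm\tfrac{\pi}{2}$ (equations~\eqref{st-euler}--\eqref{index-S}); the pointwise comparison with the volume density is then purely algebraic (Lemma on inequality~\eqref{sigmas-volume}). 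That mechanism is what your integration-by-parts idea would have to reproduce, and the natural way to do it is precisely to recognise the corrected Jacobian as (a multiple of) a closed characteristic form.
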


Our main goal is to extend the above result to higher odd dimensional spheres. The main theorem asserts 
\begin{thmA}
If $\vec{v}$ is a unit vector field on $\mathbb{S}^{2n+1}\backslash\{\pm p\}$, then
\begin{equation}
\vol(\vec{v})\geq \frac{\pi}{4}\vol(\mathbb{S}^{2n})\left(|I_{\vec{v}}(p)| + |I_{\vec{v}}(-p)| \right).
\end{equation}
\end{thmA}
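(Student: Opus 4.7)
The plan is to work in geodesic polar coordinates centered at each of the two punctures, reduce the estimate to a degree-theoretic lower bound on the Sasaki area of $\vec{v}$ restricted to each geodesic sphere, and integrate in the radial direction.

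Writing the metric of $\mathbb{S}^{2n+1}$ around $p$ as $dr^{2}+\sin^{2}(r)\,g_{\mathbb{S}^{2n}}$ for $r\in(0,\pi)$, let $\Sigma_{r}$ denote the geodesic $(2n)$-sphere at distance $r$ from $p$, and choose an orthonormal frame $\{e_{1},\ldots,e_{2n},\partial_{r}\}$ adapted to the radial foliation. Because the Sasaki horizontal lift of $\partial_{r}$ in $T^{1}\mathbb{S}^{2n+1}$ is a unit vector orthogonal to every horizontal lift of $e_{\alpha}$, the integrand of~\eqref{volume} pointwise dominates the Sasaki area element of $\vec{v}|_{\Sigma_{r}}$; combined with $\nu=dr\wedge d\Sigma_{r}$ this yields
\[
\vol(\vec{v})\;\ge\;\int_{0}^{\pi}\vol(\vec{v}|_{\Sigma_{r}})\,dr,
\]
and an analogous estimate holds using polar coordinates centered at $-p$.

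To bound $\vol(\vec{v}|_{\Sigma_{r}})$ in terms of the Poincar\'e index, I would trivialize $T\mathbb{S}^{2n+1}$ over $\mathbb{S}^{2n+1}\setminus\{-p\}$ by parallel-transporting a fixed orthonormal basis at $p$ along the radial geodesics and define a Gauss-type map $\Phi_{r}\colon\Sigma_{r}\to T^{1}_{p}\mathbb{S}^{2n+1}\cong\mathbb{S}^{2n}$ sending $x$ to the radial parallel transport of $\vec{v}(x)$ back to $T_{p}\mathbb{S}^{2n+1}$. By the definition of the Poincar\'e index as the degree of $\vec{v}$ on an infinitesimal sphere around $p$, together with homotopy invariance of the degree, $\deg\Phi_{r}=I_{\vec{v}}(p)$ for every $r\in(0,\pi)$, hence
\[
\int_{\Sigma_{r}}|\mathrm{Jac}(\Phi_{r})|\,d\Sigma_{r}\;\ge\;|I_{\vec{v}}(p)|\,\vol(\mathbb{S}^{2n}).
\]

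The technical heart of the argument is a sharp comparison between $|\mathrm{Jac}(\Phi_{r})|$ and the Sasaki area element of $\vec{v}|_{\Sigma_{r}}$. An explicit Jacobi-field calculation in the constant-curvature setting shows that in the parallel-transported frame $d\Phi_{r}(X)=\nabla_{X}\vec{v}-\Omega(X)\vec{v}$, where $\Omega(X)\vec{v}$ is a curvature-induced rotation vector lying in the two-dimensional plane $\mathrm{span}(X,\partial_{r})$ and explicitly of the form $-\tan(r/2)\,\alpha X+(1-\cos r)\,\beta\,\partial_{r}$, with scalars $\alpha,\beta$ coming from the components of $\Phi_{r}(x)$ along the radial and a transverse direction and satisfying $\alpha^{2}+\beta^{2}\le 1$. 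The two-dimensional structure of this correction is crucial: in an orthonormal basis diagonalizing $\nabla\vec{v}|_{T\Sigma_{r}}$, a Cauchy--Schwarz and Hadamard-type majorization should produce the pointwise bound $|\mathrm{Jac}(\Phi_{r})|\le\sec^{2}(r/2)\,J_{\mathrm{Sasaki}}(\vec{v}|_{\Sigma_{r}})$ rather than the naive $\sec^{2n}(r/2)$, yielding
\[
\vol(\vec{v}|_{\Sigma_{r}})\;\ge\;\cos^{2}(r/2)\,|I_{\vec{v}}(p)|\,\vol(\mathbb{S}^{2n}).
\]
The symmetric bound $\sin^{2}(r/2)\,|I_{\vec{v}}(-p)|\,\vol(\mathbb{S}^{2n})$ comes from running the argument at $-p$; applying the first estimate on $(0,\pi/2]$ and the second on $[\pi/2,\pi)$ and integrating gives
\[
\vol(\vec{v})\;\ge\;\vol(\mathbb{S}^{2n})\bigl(|I_{\vec{v}}(p)|+|I_{\vec{v}}(-p)|\bigr)\int_{0}^{\pi/2}\cos^{2}(r/2)\,dr,
\]
and the stated inequality follows since $\int_{0}^{\pi/2}\cos^{2}(r/2)\,dr=\pi/4+1/2\ge\pi/4$. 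The main anticipated obstacle is making the refined pointwise Jacobian comparison fully rigorous, i.e.\ exploiting the two-dimensional character of the rotation $\Omega(X)\vec{v}$ so that only a single factor of $\sec^{2}(r/2)$ enters, rather than one factor per tangential direction.
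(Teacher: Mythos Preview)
Your approach is genuinely different from the paper's. The paper never introduces a Gauss-type map or a parallel-transport trivialization; instead it computes an explicit representative of the Euler class $\mathcal{E}(\vec{v}^{\perp})$ of the rank-$2n$ normal bundle (Lemma~\ref{lema-euler}), shows by Stokes that $\int_{\mathbb{S}^{2n}_{\theta}}\iota^{*}\mathcal{E}(\vec{v}^{\perp})$ is independent of the latitude $\theta$ and equals the Poincar\'e index at the nearby pole, and then proves an algebraic inequality (Lemma~2, using the ``volume of a matrix'' machinery of \cite{BCN}) bounding the volume integrand from below by the coefficient function appearing in $\iota^{*}\mathcal{E}(\vec{v}^{\perp})$. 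The factor $\pi/4$ arises simply because the slice bound is \emph{constant} in $\theta$ and one integrates over an interval of length $\pi/2$ in each hemisphere; there is no $\cos^{2}(r/2)$ weight.

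The gap in your proposal is precisely the step you flag yourself: the pointwise comparison $|\mathrm{Jac}(\Phi_{r})|\le\sec^{2}(r/2)\,J_{\mathrm{Sasaki}}(\vec{v}|_{\Sigma_{r}})$ is asserted but not established, and your sketch of why a single $\sec^{2}(r/2)$ factor (rather than $\sec^{2n}(r/2)$) should suffice is not a proof. The rotation term $\Omega(X)\vec{v}$ coming from the variation of radial parallel transport does lie in $\mathrm{span}(X,\partial_{r})$, but when you take a $2n$-fold wedge to form the Jacobian, each tangential direction $X_{1},\dots,X_{2n}$ contributes its own rotation term, and these interact. A naive expansion gives one $\sec^{2}(r/2)$-type factor per direction, and showing that the cross terms cancel or are dominated by the Sasaki integrand requires exactly the kind of delicate multilinear inequality that the paper handles via the $\sigma_{2k}$ machinery of \cite{BCN}. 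Without this, your integrated bound is only $\int_{0}^{\pi/2}\cos^{2n}(r/2)\,dr$, which decays in $n$ and does not recover the stated $\pi/4$. Note also that if your $\sec^{2}(r/2)$ bound were correct as stated, you would obtain the constant $\pi/4+1/2$ rather than $\pi/4$, i.e.\ a result strictly stronger than Theorem~A; that alone should make you suspicious of the claimed pointwise inequality.
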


In comparing the above estimate to the value achieved by radial vector fields, the following consequence is deduced

\begin{coro} For any unitary vector field $\vec{v}$ on $\mathbb{S}^{2n+1}\backslash\{\pm p\}$,
\begin{equation}
\vol(\vec{v})\geq \frac{\vol(V_R)}{2}\left(|I_{\vec{v}}(p)| + |I_{\vec{v}}(-p)| \right),\nonumber
\end{equation}
where $V_R$ denotes the north-south vector field. 
\end{coro}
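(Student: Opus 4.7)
The plan is to obtain the corollary as a purely algebraic consequence of Theorem~A, with no further geometric input required. Beginning from the conclusion of Theorem~A,
\[
\vol(\vec{v})\ge \frac{\pi}{4}\vol(\mathbb{S}^{2n})\bigl(|I_{\vec{v}}(p)|+|I_{\vec{v}}(-p)|\bigr),
\]
the only task is to rewrite the coefficient $\tfrac{\pi}{4}\vol(\mathbb{S}^{2n})$ as an appropriate scalar multiple of $\vol(V_R)$.

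To do this, I would combine the closed-form expression $\vol(V_R)=\tfrac{4^{n}}{\binom{2n}{n}}\vol(\mathbb{S}^{2n+1})$ from \cite{BCN} with the standard volume identities $\vol(\mathbb{S}^{2n+1})=2\pi^{n+1}/n!$ and $\vol(\mathbb{S}^{2n})=2\cdot 4^{n}\,n!\,\pi^{n}/(2n)!$, the latter being a consequence of Legendre's duplication formula for the Gamma function. A short simplification then exhibits $\vol(V_R)$ as a fixed multiple of $\vol(\mathbb{S}^{2n})$; equivalently, one can derive this directly by recomputing $\vol(V_R)$ via the warped-product formula for the geodesic vector field emanating from $p$, which collapses the integrand to $\csc^{2n}\theta$ against the volume form $\sin^{2n}\theta\,d\theta\wedge d\omega_{\mathbb{S}^{2n}}$, giving $\vol(V_R)=\pi\vol(\mathbb{S}^{2n})$. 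Substituting this proportionality back into the bound of Theorem~A produces the estimate in the corollary.

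Since the entire derivation reduces to the manipulation of factorials and powers of $\pi$, there is no genuine geometric or analytic obstacle beyond what is already overcome in Theorem~A. The only thing to watch for is bookkeeping accuracy in the numerical constants; with the two auxiliary formulas in hand the corollary falls out immediately by substitution.
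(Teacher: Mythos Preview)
Your strategy is exactly the one the paper intends: the corollary is meant to be read off from Theorem~A together with the known value of $\vol(V_R)$, and your identity $\vol(V_R)=\pi\,\vol(\mathbb{S}^{2n})$ is correct (both derivations you sketch give it, and it checks against $\vol(V_R)=\tfrac{4^n}{\binom{2n}{n}}\vol(\mathbb{S}^{2n+1})$).

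However, the final substitution does not yield the stated inequality. Plugging $\vol(\mathbb{S}^{2n})=\vol(V_R)/\pi$ into Theorem~A gives
\[
\vol(\vec{v})\;\ge\;\frac{\pi}{4}\cdot\frac{\vol(V_R)}{\pi}\bigl(|I_{\vec v}(p)|+|I_{\vec v}(-p)|\bigr)\;=\;\frac{\vol(V_R)}{4}\bigl(|I_{\vec v}(p)|+|I_{\vec v}(-p)|\bigr),
\]
which is off by a factor of $2$ from the corollary as written. (For a sanity check at $n=1$: $\tfrac{\pi}{4}\vol(\mathbb{S}^2)=\pi^2$, while $\tfrac{1}{2}\vol(V_R)=2\pi^2=\vol(\mathbb{S}^3)$; the latter is the sharper \cite{BCJ} bound, not what Theorem~A produces.) So your claim that the substitution ``produces the estimate in the corollary'' is too quick: either the constant in the corollary should be $\vol(V_R)/4$, or an additional argument beyond Theorem~A is needed to gain the missing factor. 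You should flag this discrepancy rather than assert that the bookkeeping closes.
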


The technique presented here can be exploited to obtain a straightforward extension to arbitrary isolated singularities, in a general Riemannian compact manifold 
\begin{thmB}
Let $\vec{v}$ be a unit vector field defined on $M^{2n+1}\backslash\{\cup_{i=1}^m p_i\}$, where $M$ is a compact Riemannian manifold and $\{p_i \}$ is a subset of isolated points. Then
\begin{equation}
\label{lower-arb}
\vol(\vec{v})\geq \frac{\vol(\mathbb{S}^{2n})}{2}\sum_{i=1}^{m}|I_{\vec{v}}(p_i)|
\end{equation}
\end{thmB}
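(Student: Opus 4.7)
The approach will be to localize the argument of Theorem A to a geodesic neighborhood of each singular point and to sum the resulting contributions. Choose pairwise disjoint geodesic balls $B_{\epsilon_i}(p_i) \subset M$ with each $\epsilon_i$ smaller than the injectivity radius at $p_i$. Since the integrand $F(\nabla \vec{v}) := \sqrt{\det(\mathrm{Id}+(\nabla \vec{v})^*(\nabla \vec{v}))}$ appearing in (\ref{volume}) is nonnegative, discarding the contribution from $M \setminus \bigcup_i B_{\epsilon_i}(p_i)$ reduces the estimate to the local inequalities
\[
\int_{B_{\epsilon_i}(p_i) \setminus \{p_i\}} F \, d\mathrm{vol} \;\geq\; \frac{\vol(\mathbb{S}^{2n})}{2}\, |I_{\vec{v}}(p_i)|, \qquad i = 1, \ldots, m,
\]
whose sum then gives (\ref{lower-arb}).

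Fix $i$ and work in geodesic polar coordinates $(r, \theta) \in (0, \epsilon_i] \times \mathbb{S}^{2n}$ around $p_i$. Trivializing the tangent bundle on $B_{\epsilon_i}(p_i)$ by parallel transport along the radial geodesics emanating from $p_i$, the vector field is represented by a smooth map $\tilde{v}\colon B_{\epsilon_i}(p_i)\setminus\{p_i\} \to \mathbb{S}^{2n}$ whose restriction to each geodesic sphere $S_r(p_i)$ has degree $I_{\vec{v}}(p_i)$. The area formula then yields
\[
\int_{S_r(p_i)} |\mathrm{Jac}(\tilde{v}|_{S_r})|\, dA_r \;\geq\; |I_{\vec{v}}(p_i)| \vol(\mathbb{S}^{2n})
\]
for every $r \in (0, \epsilon_i]$. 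Choosing an orthonormal frame $\{\partial_r, e_1, \ldots, e_{2n}\}$ with $e_1, \ldots, e_{2n}$ tangent to $S_r(p_i)$, one extracts from (\ref{volume}) the pointwise bound $F \geq \|\nabla_{e_1}\vec{v} \wedge \cdots \wedge \nabla_{e_{2n}}\vec{v}\|$; moreover, since parallel transport along radial geodesics agrees with the Levi-Civita transport in the radial direction and differs from it tangentially by an $O(r)$ correction vanishing at $p_i$, this wedge agrees with $|\mathrm{Jac}(\tilde{v}|_{S_r})|$ up to a controllable factor tending to $1$ as $r \to 0$.

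The heart of the argument will be a sharper pointwise inequality on $F$, adapted from Theorem A, which combines the tangential wedge above with a radial factor so that integration against the Riemannian polar volume element produces the universal constant $\tfrac{1}{2}$ independently of $\epsilon_i$ and of the geometry of $M$ near $p_i$. The most plausible route is to rewrite $\det(\mathrm{Id} + (\nabla\vec{v})^*(\nabla\vec{v}))$ in the polar frame with a splitting into radial and tangential blocks, apply an AM-GM or Young-type estimate that trades a portion of the determinant for a factor matching the polar density $\sqrt{\det g}(r,\theta)$, and then integrate in $r$ to extract the $\tfrac{1}{2}$. The principal obstacle is pinning down this sharper inequality and verifying the resulting constant is exactly $\tfrac{1}{2}$; a secondary concern is controlling rigorously the $O(r)$ discrepancy between the trivialization derivative and $\nabla \vec{v}$ so that the degree bound on $\mathrm{Jac}(\tilde{v}|_{S_r})$ transfers faithfully to the wedge of covariant derivatives in any polar-adapted orthonormal frame.
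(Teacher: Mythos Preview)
Your localization step is fine, but the local inequality you aim for,
\[
\int_{B_{\epsilon_i}(p_i)\setminus\{p_i\}} F\, d\mathrm{vol} \;\geq\; \frac{\vol(\mathbb{S}^{2n})}{2}\,|I_{\vec v}(p_i)|,
\]
is false, and no ``sharper pointwise inequality'' will rescue it. Take $M=\mathbb{S}^{2n+1}$ and $\vec v=V_R$ the radial field. Near $p$ one has $\nabla_{e_j}V_R=\cot(r)\,e_j$ for $e_j$ tangent to $S_r$ and $\nabla_{V_R}V_R=0$, hence $F=\csc^{2n}r$, while $d\mathrm{vol}=\sin^{2n}r\,dA_{\mathbb{S}^{2n}}\,dr$. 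Thus
\[
\int_{B_\epsilon(p)} F\,d\mathrm{vol}=\int_0^{\epsilon}\vol(\mathbb{S}^{2n})\,dr=\epsilon\,\vol(\mathbb{S}^{2n}),
\]
which is below $\tfrac{1}{2}\vol(\mathbb{S}^{2n})$ whenever $\epsilon<\tfrac{1}{2}$, even though $|I_{V_R}(p)|=1$. So the constant $\tfrac{1}{2}$ cannot arise from integrating over a small ball, and the AM--GM/Young manoeuvre you sketch cannot produce an $\epsilon_i$--independent bound.

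The paper's mechanism is different and is precisely what you are missing. It does not integrate radially at all: it works on a single geodesic hypersphere $S^{2n}_{\theta_i}$ around each $p_i$. The key object is the Euler form $\mathcal{E}(\vec v^{\perp})$ of the normal bundle, a \emph{closed} $2n$--form; closedness forces $\int_{S_r}\iota^*\mathcal{E}(\vec v^{\perp})=I_{\vec v}(p_i)$ for every $r$, with no limiting procedure needed. Lemma~2 (inequality (\ref{sigmas-volume})) then gives a pointwise bound of $F$ by the very combination $\sum_k\binom{n}{k}\binom{2n}{2k}^{-1}(|\sigma_{2k}|+|\sigma_{2k}^{\perp}(2n)|)$ that dominates the density of $\iota^*\mathcal{E}(\vec v^{\perp})$; the factor $\tfrac{\vol(\mathbb{S}^{2n})}{2}$ is simply the reciprocal of the Pfaffian normalization $\tfrac{2}{\vol(\mathbb{S}^{2n})}$, not the outcome of a radial integral. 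Your Jacobian/area--formula idea captures only the top term $|\sigma_{2n}|$ of this sum, which by itself is neither closed nor guaranteed to integrate to the index on $S_r$ once $r$ is not infinitesimal; that is also why your ``$O(r)$ discrepancy'' is a genuine obstruction rather than a technicality.
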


This paper is organized as follows. We start Section 2 by introducing the Euler class of the normal bundle of $\vec{v}$, and then we define a list of functions depending on the vector field. We finish this Section by exhibiting an explicit representative of the Euler class. Section 3 is divided in five subsections, and in the last two of them we prove theorems A and B, respectively. Subsection \ref{poincare-index} is devoted to show how the indices of the vector field arise when the Euler class is restricted to small neighborhoods around its singularities. In Subsection \ref{inequalities} we briefly review some results from \cite{BCN} and use them to establish a comparison between the integrand in \ref{volume} and a function determined by the restriction of the Euler class. Last Section is dedicated to discuss the main theorems and future developments as well. 
\newpage
%%%%%%%%%%%%%%%%%%%%%%%%%%%%%%%%%%%%%%%%%%%%%%%%%%%%%%%%%%%%%%%%%%%%%%%%%%%%%%%%%%%%%%%%%%%%%%%%%%%%%%%%%%%%%%%%%%%%%%%%%%%
\section{Preliminaries and the Euler class}
Let $n\geq 1$ and set $M:=\mathbb{S}^{2n+1}\backslash\{\pm p \}$, endowed with the Riemannian metric $\langle \cdot, \cdot \rangle$. Let $\vec{v}$ be a unit vector field $\vec{v}: M\to T^1M$, and take $\{e_1, \dots, e_{2n}, e_{2n+1} = \vec{v} \}$ as an orthonormal local frame. We fix the following notation: $1\leq i,j,k,l,\dots \leq 2n$ and $1\leq A,B,C,D,\dots\leq 2n+1$. If $\{\omega_A \}$ is the associated local coframe, then the curvature and connection forms are related by the structure equations of $M$,
\begin{equation}
\omega_A(e_B) = \delta_{AB}, \quad \delta_{AB}=0\; {\rm if}\; A\neq B, \quad \delta_{AA}=1, \quad \nabla e_A = \sum_B \omega_{AB}e_B, \quad \omega_{AB}+\omega_{BA} = 0, \nonumber
\end{equation}
\begin{equation}
d\omega_A = \sum_B \omega_{AB}\wedge \omega_B, \quad d\omega_{AB} = \sum_C \omega_{AC}\wedge \omega_{CB} - \Omega_{AB}, \nonumber
\end{equation}
\begin{equation}
\Omega_{AB} = \frac{1}{2}\sum_{C,D}R_{ABCD}\omega_C \wedge \omega_D, \quad R_{ABCD}+ R_{ABDC}=0,\nonumber
\end{equation}

The normal bundle $\vec{v}^{\perp}$ is a subbundle of $TM$, and it admits a natural second fundamental form given locally by the matrix $(a_{ij})$, constructed with respect to the aforementioned local frame, $a_{AB} = \langle \nabla_{e_B} \vec{v}, e_A\rangle$. The curvature form of $\vec{v}^{\perp}$, $\Omega_{AB}^{\perp}$, is related to $\Omega_{AB}$ by means of the structure equations, 
\begin{equation}
\label{ee}
\Omega_{AB}^{\perp} = \Omega_{AB} + \omega_{A\, 2n+1} \wedge \omega_{B\, 2n+1}.
\end{equation}

We recall the definition of the Euler form in terms of the Pfaffian of $\Omega_{AB}^{\perp}$, 
\begin{equation}
\mathcal{E}(\vec{v}^{\perp}) = \frac{2}{(2n)!\vol(\mathbb{S}^{2n})}\sum_{\sigma\in \mathcal{S}_{2n}}{\rm sgn}(\sigma) \Omega_{\sigma(1)\sigma(2)}^{\perp}\wedge\cdots \wedge \Omega_{\sigma(2n-1)\sigma(2n)}^{\perp},
\end{equation}
where $\mathcal{S}_{2n}$ stands for the permutation group of $2n$ elements while ${\rm sgn}(\sigma)$ equals the sign of $\sigma$. 

Before computing $\mathcal{E}(\vec{v}^{\perp})$ we need to settle our notation. For each $1\leq i\leq 2n$, we say that $\sigma_i$ is the $i$-th elementary symmetric function of the matrix $(a_{ij})$. The function $\sigma_i$ is the sum of all $i\times i$ minors from $(a_{ij})$. 

The last column of $(a_{AB})$ has some special meaning. It is formed by the elements $a_{i\,2n+1} = \langle \nabla_{\vec{v}} \vec{v}, e_i\rangle$, which are components of the acceleration of $\vec{v}$. We employ these components in the next definition.

\begin{defi}
\label{sym}
Let $(a_{ij}(l))$ denote the $2n\times 2n$ matrix obtained from $(a_{ij})$ by changing its $l$-th column with the components of $\nabla_{\vec{v}} \vec{v}$, 
\[
(a_{ij}(l)) = 
  \left(\begin{array}{cccccccc}
      a_{11} & \cdots & a_{1\, l-1}  & a_{1\, 2n+1}  & a_{1\, l+1}  &\cdots& a_{1\,2n}\\
      \vdots &        & \vdots       & \vdots        & \vdots       && \vdots \\
  a_{2n\, 1} & \cdots & a_{2n\, l-1} & a_{2n\, 2n+1} & a_{2n\, l+1} &\cdots& a_{2n\,2n}  
  \end{array}\right).
\]

We say that $\sigma_i^{\perp}(l)$ is the sum of all $i \times i$ minors of the matrix $(a_{ij}(l))$ having at least one element depending on $\nabla_{\vec{v}} \vec{v}$. 
\end{defi}
For example, $\sigma_2^{\perp}(2n)$ is the sum of all $2\times 2$ minors of $a_{ij}(2n)$ such that at least one of their columns is made of components of $\nabla_{\vec{v}} \vec{v}$,

$$ \sigma_2^{\perp}(2n) = \sum_{\substack{j=1\\1\leq i<k\leq 2n-1}}^{2n}
\det\begin{bmatrix}
a_{ij} & a_{i\,2n+1} \\ 
a_{kj}  & a_{k\,2n+1}
\end{bmatrix}.
$$
It is important that we distinguish the functions $\sigma_i^{\perp}(l)$ from the symmetric elementary functions of $(a_{ij}(l))$, say $\sigma_i(l)$. The former is just a part of the latter, and they naturally appear when computing the Euler class of $\vec{v}^{\perp}$.

\begin{lema} The Euler class $\mathcal{E}(\vec{v}^{\perp})\in H^{2n}(M, \mathbb{R})=H^{2n}(\mathbb{S}^{2n+1}\backslash\{\pm p \}, \mathbb{R}) \cong \mathbb{R}$ can be represented by the following element
\label{lema-euler}
\begin{equation}
\label{euler}
\mathcal{E}(\vec{v}^{\perp}) = \frac{2}{\vol(\mathbb{S}^{2n})} \sum_{k=0}^{n}{n\choose k}{2n\choose 2k}^{-1}W(k),
\end{equation}
where, denoting $\widehat{\omega}$ the omitted term,  
\begin{eqnarray}
W(k)\! &=& \sum_C \sigma_{2k}^{\perp}(C)\omega_1\wedge\cdots\wedge\widehat{\omega_C}\wedge\cdots\wedge\omega_{2n+1} \nonumber\\ &=&  \sum_l \sigma_{2k}^{\perp}(l)\omega_1\wedge\cdots\wedge\widehat{\omega_l}\wedge\cdots\wedge\omega_{2n+1} + \sigma_{2k}\omega_1 \wedge\cdots\wedge\omega_{2n}. 
\nonumber
\end{eqnarray}
\end{lema}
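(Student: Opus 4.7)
The plan is to start from the Pfaffian-like formula defining $\mathcal{E}(\vec{v}^\perp)$, substitute the Gauss-type identity (\ref{ee}), expand the product of curvature forms, and match the resulting coefficients with the symmetric functions $\sigma_{2k}$ and $\sigma_{2k}^\perp(l)$ of the preceding definition.

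First I would derive the bridge identity $\omega_{A\,2n+1}=-\sum_{B=1}^{2n+1}a_{AB}\,\omega_B$. From $\nabla e_A=\sum_B\omega_{AB}e_B$ and the fact that $\langle e_A,\vec{v}\rangle\equiv 0$ for $A\leq 2n$, differentiating along $e_B$ gives $\omega_{A\,2n+1}(e_B)=-\langle e_A,\nabla_{e_B}\vec{v}\rangle=-a_{AB}$ for every $B=1,\dots,2n+1$; in particular the $B=2n+1$ component picks up the acceleration entry $a_{A\,2n+1}$. This identity is the sole link between the curvature calculation and the matrix $(a_{ij})$ together with its acceleration column.

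Since $\mathbb{S}^{2n+1}$ has constant sectional curvature one, $\Omega_{AB}=\omega_A\wedge\omega_B$; hence by (\ref{ee}) each factor in the Pfaffian reads $\Omega^\perp_{AB}=\omega_A\wedge\omega_B+\omega_{A\,2n+1}\wedge\omega_{B\,2n+1}$. Expanding the product, each permutation $\sigma$ contributes $2^n$ terms, which I would group by the number $k$ of factors drawn from the ``normal'' piece $\omega_{\cdot\,2n+1}\wedge\omega_{\cdot\,2n+1}$, the remaining $n-k$ factors being the ``tangential'' piece $\omega_\cdot\wedge\omega_\cdot$. Using the $\binom{n}{k}$ symmetric choices of which positions are normal,
$$\sum_\sigma\mathrm{sgn}(\sigma)\prod_{i=1}^n\Omega^\perp_{\sigma(2i-1)\sigma(2i)}=\sum_{k=0}^n\binom{n}{k}Q_k,$$
where $Q_k$ is the sum over $\sigma\in\mathcal{S}_{2n}$ with a fixed block of $k$ normal slots and $n-k$ tangential slots.

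The heart of the argument is the evaluation of $Q_k$ as a $2n$-form. Inserting the bridge identity turns each normal factor into $\sum_{C,D}a_{\sigma(2i-1)C}a_{\sigma(2i)D}\,\omega_C\wedge\omega_D$ with $C,D\in\{1,\dots,2n+1\}$. The total wedge of $2n$ coframe factors is nonzero only when all resulting indices are distinct, which forces the top-form to be either $\omega_1\wedge\cdots\wedge\omega_{2n}$ (no index equal to $2n+1$) or $\omega_1\wedge\cdots\wedge\widehat{\omega_l}\wedge\cdots\wedge\omega_{2n+1}$ (exactly one factor equal to $\omega_{2n+1}$, some $l\leq 2n$ missing). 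For each fixed top-form, $\mathrm{sgn}(\sigma)$ combines with the sign that sorts the $\omega$'s into standard order; the double antisymmetrization over the $2k$ row-labels used in the normal slots and the $2(n-k)$ tangential labels collapses the sum of products of $a_{AB}$'s into a single $2k\times 2k$ determinant. Summing over the admissible row-sets yields, respectively, $\sigma_{2k}$ and $\sigma_{2k}^\perp(l)$, each appearing $(2k)!\,(2(n-k))!$ times by the independent relabellings of the row and column factors. Consequently $Q_k=(2k)!\,(2(n-k))!\,W(k)$.

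Combining with the overall prefactor,
$$\mathcal{E}(\vec{v}^\perp)=\frac{2}{(2n)!\vol(\mathbb{S}^{2n})}\sum_{k=0}^n\binom{n}{k}(2k)!(2(n-k))!\,W(k)=\frac{2}{\vol(\mathbb{S}^{2n})}\sum_{k=0}^n\binom{n}{k}\binom{2n}{2k}^{-1}W(k),$$
since $(2k)!(2(n-k))!/(2n)!=\binom{2n}{2k}^{-1}$, giving exactly the announced representative.

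The main obstacle is the identification inside $Q_k$, where three sign contributions must be tracked in parallel: $\mathrm{sgn}(\sigma)$; the sign from sorting the wedge of $\omega_C$'s into canonical order; and, for the pieces producing $\omega_1\wedge\cdots\wedge\widehat{\omega_l}\wedge\cdots\wedge\omega_{2n+1}$, the sign coming from the placement of the acceleration column at slot $l$ of the matrix $(a_{ij}(l))$. Verifying that these conspire to produce precisely $\sigma_{2k}^\perp(l)$ with the stated $(2k)!(2(n-k))!$ multiplicity is the delicate combinatorial core of the argument.
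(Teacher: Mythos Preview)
Your proposal is correct and follows essentially the same route as the paper's own proof: both use $\Omega_{AB}=\omega_A\wedge\omega_B$, expand $\Omega^\perp_{AB}$ via (\ref{ee}), group the $2^n$ terms by the number $k$ of ``normal'' factors, insert the identity $\omega_{A\,2n+1}=-\sum_B a_{AB}\omega_B$, and then read off the coefficient $(2k)!(2n-2k)!/(2n)!=\binom{2n}{2k}^{-1}$ together with the identification of the minors as $\sigma_{2k}$ and $\sigma_{2k}^\perp(l)$. Your write-up is in fact more explicit than the paper's about the sign bookkeeping in the identification step, which the paper dispatches with the sentence ``it is a matter of separating those minors according to the $2n$-form which multiplies them.''
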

\begin{proof}

The fact that $\Omega_{AB} = \omega_A\wedge\omega_B$ (the metric on $M$ is just the restriction of the round Riemannian metric of $\mathbb{S}^{2n+1}$) together with a nice rearrangement of terms imply  
\begin{eqnarray}
\mathcal{E}(\vec{v}^{\perp})\!\! &=&\!\! \frac{2}{(2n)!\vol(\mathbb{S}^{2n})}\sum_{\sigma\in \mathcal{S}_{2n}}{\rm sgn}(\sigma)\sum_{k=0}^{n}{n\choose k}
\omega_{\sigma(1)}\wedge\cdots\wedge \omega_{\sigma(2k)}\wedge\omega_{\sigma(2k+1)\, 2n+1}\wedge\cdots\wedge \omega_{\sigma(2n)\, 2n+1}.\nonumber
\end{eqnarray}

Taking the second fundamental form of $\vec{v}^{\perp}$ into account, we write $\omega_{A\, 2n+1} = -\sum_Ba_{AB}\omega_B$, and consequently  $\omega_{A\, 2n+1}\wedge \omega_{B\, 2n+1} = 
\sum_{C,D} a_{AC} a_{BD}\omega_{C} \wedge\omega_{D}
$. Hence
\begin{eqnarray}
\mathcal{E}(\vec{v}^{\perp}) &=& \frac{2}{(2n)!\vol(\mathbb{S}^{2n})}\sum_{\sigma\in \mathcal{S}_{2n}}{\rm sgn}(\sigma)\sum_{k=0}^{n}{n\choose k}
\omega_{\sigma(1)}\wedge\cdots\wedge \omega_{\sigma(2k)}\nonumber \\
&\wedge& \left(\sum_{B_1} a_{\sigma(2k+1)B_1}\omega_{B_1}\right)\wedge\cdots\wedge \left(\sum_{B_{2(n-k)}} a_{\sigma(2n)B_{2(n-k)}}\omega_{B_{2(n-k)}}\right).\nonumber
\end{eqnarray}

Now it is a matter of separating the coefficients of $2n$-forms $\omega_{A_1}\wedge \cdots\wedge\omega_{A_{2n}}$.

When we fix those $2n$-forms, we have to count them within all permutations in $\mathcal{S}_{2n}$. For example, $k=1$ gives us the following summand 
$$
\sum_{\sigma\in \mathcal{S}_{2n}}{\rm sgn}(\sigma)\omega_{\sigma(1)}\wedge\omega_{\sigma(2)}
\wedge\left(\sum_{B_1} a_{\sigma(3)B_1}\omega_{B_1}\right)\wedge\cdots\wedge \left(\sum_{B_{2(n-1)}} a_{\sigma(2n)B_{2(n-1)}}\omega_{B_{2(n-1)}}\right).
$$ 

Consequently, we end up with a number, $(2n-2k)!(2k)!$, and since the Pfaffian is divided by $(2n)!$ we have that $\frac{(2n-2k)!(2k)!}{(2n)!} = {2n\choose 2k}^{-1}$.   

On the other hand, the products $a_{\sigma(2k+1)B_1}\cdots a_{\sigma(2n)B_{2(n-k)}}$ from
$$
\left(\sum_{B_1} a_{\sigma(2k+1)B_1}\omega_{B_1}\right)\wedge\cdots\wedge \left(\sum_{B_{2(n-k)}} a_{\sigma(2n)B_{2(n-k)}}\omega_{B_{2(n-k)}}\right)
$$
determine some minors coming from the matrix $(a_{AB})$. Functions like $\sigma_i^{\perp}(\cdot)$ from definition \ref{sym} appear every time $B_{i} = 2n+1$, for some $i$, and this happens in all terms except in the coefficient of $\omega_1\wedge\cdots\wedge\omega_{2n}$, which is accompanied by the elementary symmetric functions of $(a_{ij})$. Finally, it is a matter of separating those minors according to the $2n$-form which multiplies them. 

\end{proof}

%%%%%%%%%%%%%%%%%%%%%%%%%%%%%%%%%%%%%%%%%%%%%%%%%%%%%%%%%%%%%%%%%%%%%%%%%%%%%%%%%%%%%%%%%%%%%%%%%%%%%%%%%%%%%%%%%%%%%%%%%%%%%%%%%%%%%%
\section{Development towards demonstrating theorems A and B}
\subsection{Poincar\'e index}
\label{poincare-index}
Let $\mathbb{S}^{2n}_{\theta}$ be a parallel of latitude $\theta \in (-\frac{\pi}{2}, \frac{\pi}{2})$ and let $\iota = \iota_{\theta}:\mathbb{S}^{2n}_{\theta}\to M $ be its natural embedding. We may assume that $p$ belongs to the northern hemisphere of $\mathbb{S}^{2n+1}$, while $-p$ is in the southern hemisphere. Given $\epsilon > 0$, $\mathbb{S}^{2n}_{\frac{\pi}{2} - \epsilon}$ is a small parallel near $p$, and together with $\mathbb{S}^{2n}_{\theta}$ we have an associated annulus region $A_{\theta,\,\epsilon}^{2n}$ of dimension $2n$, with boundary $\mathbb{S}^{2n}_{\frac{\pi}{2} - \epsilon}\cup \mathbb{S}^{2n}_{\theta}$; see figure \ref{fig:sphere-sn}. 
\begin{figure}
\centering
\includegraphics[width=0.4\linewidth]{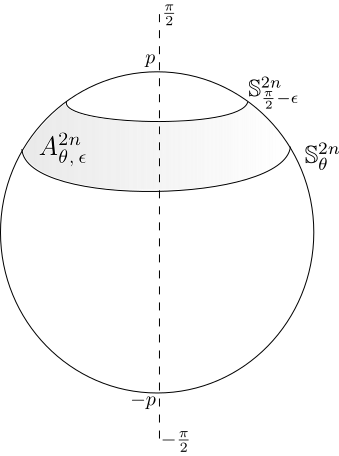}
\caption{$\mathbb{S}^{2n+1}$ with an annulus region near north pole.}
\label{fig:sphere-sn}
\end{figure}

By Stokes' theorem, 
$$
\int_{A_{\theta,\,\epsilon}^{2n}} d\,\iota^*(\mathcal{E}(\vec{v}^{\perp})) = \int_{\mathbb{S}^{2n}_{\frac{\pi}{2} - \epsilon}\cup\ \mathbb{S}^{2n}_{\theta}} \iota^*(\mathcal{E}(\vec{v}^{\perp})).
$$
However, $\mathcal{E}(\vec{v}^{\perp})$ is closed, so $d\,\iota^*(\mathcal{E}(\vec{v}^{\perp})) = 0$ and we conclude that the integrals of its restrictions to both spheres are equal, 
\begin{equation}
\label{st-euler}
\int_{\mathbb{S}^{2n}_{\frac{\pi}{2} - \epsilon}} \iota^*(\mathcal{E}(\vec{v}^{\perp})) = \int_{ \mathbb{S}^{2n}_{\theta}} \iota^*(\mathcal{E}(\vec{v}^{\perp})).
\end{equation}

Next we compute the restriction of $\iota^*(\mathcal{E}(\vec{v}^{\perp}))$ on $\mathbb{S}^{2n}_{\theta}$. 

We may suppose that $e_1, \dots,e_{2n-1}$ are all tangent to $\mathbb{S}^{2n}_{\theta}$. Let $\alpha \in [0,2\pi]$ be the oriented angle from the tangent space of $\mathbb{S}^{2n}_{\theta}$ to $\vec{v}$. In this case, $\{e_1, \dots,e_{2n-1},u := \sin\alpha e_{2n} + \cos \alpha \vec{v} \}$ is an orthonormal positively oriented local frame on $\mathbb{S}^{2n}_{\theta}$. 

Fix $0\leq k \leq n$. Following equation \ref{euler} of Lemma \ref{lema-euler}, we decompose $W(k)$ as follows
\begin{eqnarray}
W(k) &=& \sum_{l=1}^{2n-1} \sigma_{2k}^{\perp}(l)\omega_1\wedge\cdots\wedge\widehat{\omega_l}\wedge\cdots\wedge\omega_{2n+1} \nonumber\\ &+& \sigma_{2k}^{\perp}(2n)\omega_1 \wedge\cdots\wedge\omega_{2n-1}\wedge \omega_{2n+1} +\sigma_{2k}\omega_1 \wedge\cdots\wedge\omega_{2n}. 
\nonumber
\end{eqnarray}
By applying $W(k)$ on $(e_1, \dots,e_{2n-1},u)$, we see that $$\omega_1\wedge\cdots\wedge\widehat{\omega_l}\wedge\cdots\wedge\omega_{2n+1} (e_1, \dots,e_{2n-1},u) = 0,$$ when $1\leq l \leq 2n-1$, because $e_l$ is in $(e_1, \dots,e_{2n-1},u)$ but $\omega_l$ is omitted. Thus, just the last two terms remain, i.e., $$W(k)(e_1, \dots,e_{2n-1},u) = \sin\alpha\,\sigma_{2k} + \cos\alpha\, \sigma^{\perp}_{2k}(2n).$$ Therefore, 
\begin{equation}
\label{pullback-euler}
\iota^*(\mathcal{E}(\vec{v}^{\perp})) = \frac{2}{\vol(\mathbb{S}^{2n})} \sum_{k=0}^{n}{n\choose k}{2n\choose 2k}^{-1}\left(\sin\alpha\,\sigma_{2k} + \cos\alpha\, \sigma^{\perp}_{2k}(2n)\right) \, \nu_{\mathbb{S}^{2n}_{\theta}}.
\end{equation}

Going back to \ref{st-euler}, its right hand side remains unchanged when we take the limit as $\epsilon$ goes to zero. Nevertheless, its left hand side is an integral of a function similar to the one appearing in \ref{pullback-euler}, but for a different angle, since this angle depends on latitude of the parallel $\mathbb{S}^{2n}_{\frac{\pi}{2}-\epsilon}$, and of course on the vector $\vec{v}$. Thus, as $\epsilon$ goes to zero the only non-vanishing term comes from the restriction of $\vec{v}$ to $\mathbb{S}^{2n}_{\frac{\pi}{2} - \epsilon}$, which is the degree of $\vec{v}: \mathbb{S}^{2n}_{\frac{\pi}{2} - \epsilon} \to \mathbb{S}^{2n}$, and this degree equals the Poncar\'e index around $p$ (cf. \cite{C}).  Therefore, 
\begin{equation}
\label{index-N}
\lim_{\epsilon\to 0} \int_{\mathbb{S}^{2n}_{\frac{\pi}{2} - \epsilon}} \iota^*(\mathcal{E}(\vec{v}^{\perp})) = I_{\vec{v}}(p). \end{equation}
Following a similar argument, 
\begin{equation}
\label{index-S}\lim_{\epsilon\to 0} \int_{\mathbb{S}^{2n}_{-\frac{\pi}{2} + \epsilon}} \iota^*(\mathcal{E}(\vec{v}^{\perp})) = I_{\vec{v}}(-p). 
\end{equation}

%%%%%%%%%%%%%%%%%%%%%%%%%%%%%%%%%%%%%%%%%%%%%%%%%%%%%%%%%%%%%%%%%%%%%%%%%%%%%%%%%%%%%%%%%%%%%%%%%%%%%%%%%%%%%%%%%%%%%%%%%%
\subsection{Inequalities: volume of a matrix}
\label{inequalities}
Our previous discussion determines how the Euler form relates to the volume form of $\mathbb{S}^{2n}_{\theta}$, and when the Poincar\'e indices of $\vec{v}$ arise when a representative of the Euler class of $\vec{v}^{\perp}$ restricts to small neighborhoods around $\pm p$. Now we compare the function on \ref{pullback-euler} to $\sqrt{\det({\rm Id} + (\nabla \vec{v})^* (\nabla \vec{v}))}$. 

Following \cite{BCN}, the volume of a linear transformation $T:V^m\to V^m$ is the volume of the graph of the cube under $T$. Equivalently,

\begin{prop}[\cite{BCN}] Let $T$ be an endomorphism and $B=(b_{ij})$ the matrix of $T$ associated to some orthonormal basis. Then
\begin{equation}
\label{volT}
\vol(T) = \Bigg(1+\sum_{1\leq i,j\leq m}b_{ij}^2 + \sum_{\substack{i_1<i_2\\ j_1<j_2}}\left(\det B^{i_1i_2}_{j_1j_2}\right)^2 + \cdots + \sum_{\substack{i_1<\cdots < i_{m-1}\\ j_1<\cdots < j_{m-1}}}\left(\det B^{i_1\cdots  i_{m-1}}_{j_1\cdots  j_{m-1}}\right)^2 + (\det B)^2                          \Bigg)^{\frac{1}{2}},   \nonumber
\end{equation}
where $B^{i_1\cdots  i_{k}}_{j_1\cdots  j_{k}}$ is the submatrix of $B$ corresponding to the rows $i_1\cdots  i_{k}$ and columns $j_1\cdots  j_{k}$.
\end{prop}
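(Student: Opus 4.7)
The approach is to invoke the Cauchy--Binet formula applied to the block-structured Jacobian of the graph map of $T$. First I would reinterpret $\vol(T)$ as the $m$-dimensional volume of the image of the unit cube $[0,1]^m$ under the graph parametrization $\Phi: V \to V \oplus V$, $\Phi(x) = (x, Tx)$. With respect to the chosen orthonormal basis, the Jacobian of $\Phi$ is the $2m \times m$ block matrix
$$
J = \begin{pmatrix} I_m \\ B \end{pmatrix},
$$
and the standard area formula gives $\vol(T) = \sqrt{\det(J^{\top} J)} = \sqrt{\det(I_m + B^{\top} B)}$, which is exactly the quantity being expanded.

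Next I would apply Cauchy--Binet, expressing $\det(J^{\top} J)$ as a sum over $m$-element subsets $S$ of the rows of $J$ of the squared minors $(\det J_S)^2$. Each such $S$ decomposes uniquely as a disjoint union of an index set $I_1 \subseteq \{1,\dots,m\}$ picking out rows from the top identity block and an index set $I_2 \subseteq \{1,\dots,m\}$, with $|I_2| = k = m - |I_1|$, picking out rows from the bottom block $B$. The submatrix $J_S$ then consists of $m-k$ standard basis rows together with $k$ rows extracted from $B$; expanding the determinant along the standard basis rows clears the columns indexed by $I_1$ and leaves, up to a sign depending only on $I_1$ and $I_2$, the minor $\det B^{I_2}_{J_1}$, where $J_1 = \{1,\dots,m\} \setminus I_1$ has cardinality $k$.

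Squaring removes the sign, and summing over $k \in \{0,1,\dots,m\}$ and over all choices of $I_2$ and $J_1$ of cardinality $k$ reproduces the stated formula, with the $k=0$ term contributing the constant $1$ and the $k=m$ term contributing $(\det B)^2$. There is no substantial obstacle in this argument; the content is essentially a packaging of Cauchy--Binet adapted to the block form of the graph Jacobian. The only care required is in tracking the sign bookkeeping in the expansion of $\det J_S$ and in checking that, after squaring, the resulting minors are precisely the $B^{i_1 \cdots i_k}_{j_1 \cdots j_k}$ indexed by strictly increasing row and column sequences, as written in the statement.
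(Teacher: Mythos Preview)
Your argument is correct: the graph parametrization gives $\vol(T)=\sqrt{\det(I_m+B^{\top}B)}$, and Cauchy--Binet applied to the $2m\times m$ matrix $J=\begin{pmatrix}I_m\\ B\end{pmatrix}$ yields exactly the minor expansion in the statement, with the sign disappearing upon squaring. Note, however, that the paper does not supply a proof of this proposition at all---it is quoted from \cite{BCN} as background, so there is no ``paper's own proof'' to compare against; your Cauchy--Binet proof is the standard one and would be an appropriate way to fill in the citation.
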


In order to prove theorem \ref{BCN}, the authors compared the volume of a given $2m\times 2m$ diagonal matrix $D$ (with nonnegative entries) to the sum of its elementary symmetric functions. They proved an algebraic inequality (it comes from ``Fundamental Lemma", Section 3 of \cite{BCN})
\begin{equation}
\label{vol-comp}
\vol(D)\geq \left(\sum_{k=0}^{m} {m \choose k}{2m \choose 2k}^{-1} \sigma_{2k}(D) \right). 
\end{equation}

Our goal is to exhibit a matrix of even dimension such that its volume coincide with $\sqrt{\det({\rm Id} + (\nabla \vec{v})^* (\nabla \vec{v}))}$ and its elementary symmetric functions are directly related (or can be compared) to the sum $\sigma_{2k} + \sigma^{\perp}_{2k}(2n)$.

When we fix an orthonormal local frame $\{e_1, \dots, e_{2n}, \vec{v} \}$, we have an associated $(2n+1)\times (2n+1)$ matrix $(a_{AB}) = (\langle \nabla_{e_B} \vec{v}, e_A\rangle)$,
\[
(a_{AB}) = 
  \left(\begin{array}{ccc|c}
    \multicolumn{3}{c|}{\multirow{3}{*}{\raisebox{2pt}{$(a_{ij})$}}}     & a_{1\, 2n+1}           \\ 
    &       &            & {\vdots}    \\
    &       &            & a_{2n\, 2n+1}          \\ \hline
    0      & \cdots & 0  & 0
  \end{array}\right).
\]
Notice that the last row is zero since $\vec{v}$ is a unitary vector field. 

\begin{lema} According to the notation settled above,
\begin{equation}
\label{sigmas-volume}
\sqrt{\det({\rm Id} + (\nabla \vec{v})^* (\nabla \vec{v}))}\geq \sum_{k=0}^{n} {n \choose k}{2n \choose 2k}^{-1} \left(|\sigma_{2k}| +  |\sigma^{\perp}_{2k}(2n)|\right).
\end{equation}
\end{lema}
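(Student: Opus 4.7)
The plan is to apply the ``Fundamental Lemma'' (inequality \ref{vol-comp}) from \cite{BCN} to an auxiliary $2n \times 2n$ matrix built from the columns of $A = (a_{ij})$ and the acceleration column $b = (a_{i, 2n+1})^T$, following the strategy announced just before the lemma.

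First I would identify the LHS with $\vol(\tilde M)$, where $\tilde M = [A \mid b]$ is the $2n \times (2n+1)$ submatrix of $(a_{AB})$. Indeed, in the frame $\{e_A\}$ the matrix of $\nabla \vec v$ is the block matrix $\begin{pmatrix} A & b \\ 0 & 0 \end{pmatrix}$, whose last row vanishes because $\vec v$ is unit; hence $\sqrt{\det({\rm Id} + (\nabla \vec v)^*(\nabla \vec v))} = \sqrt{\det(I_{2n} + \tilde M \tilde M^T)} = \vol(\tilde M)$. Furthermore, the Proposition expanding the volume of a linear transformation displays $\vol(\tilde M)$ as a sum of squared minors of $\tilde M$, and these minors partition naturally into those lying inside $A$ (which are the ones feeding $\sigma_{2k}$) and those using the acceleration column $b$ (which are the ones feeding $\sigma^\perp_{2k}(2n)$).

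Next I would construct a $2n \times 2n$ matrix $N = [\, a_1, \ldots, a_{2n-1}, \epsilon\, a_{2n} + \eta\, b\,]$ with signs $\epsilon, \eta \in \{\pm 1\}$ chosen compatibly with the signs of $\sigma_{2k}$ and $\sigma^\perp_{2k}(2n)$. By multilinearity of the determinant in the last column, each $2k \times 2k$ minor of $N$ that uses the last column splits as an $\epsilon$-weighted minor of $A$ involving column $2n$ plus an $\eta$-weighted minor of $[A,b]$ involving $b$, while minors avoiding the last column coincide with those of $A$ avoiding column $2n$. A suitable sign choice thus arranges $\sigma_{2k}(N) \ge |\sigma_{2k}| + |\sigma^\perp_{2k}(2n)|$. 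Since the columns of $N$ lie in the column-span of $\tilde M$, a PSD comparison $NN^T \preceq \tilde M \tilde M^T$ gives $\vol(N) \le \vol(\tilde M)$. Applying the Fundamental Lemma \ref{vol-comp} to $N$ with $m = n$ then produces the chain
\[
\vol(\tilde M) \;\ge\; \vol(N) \;\ge\; \sum_{k=0}^{n} \binom{n}{k}\binom{2n}{2k}^{-1}\sigma_{2k}(N) \;\ge\; \sum_{k=0}^{n} \binom{n}{k}\binom{2n}{2k}^{-1}\bigl(|\sigma_{2k}| + |\sigma^\perp_{2k}(2n)|\bigr),
\]
which is the claim.

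The main obstacle is the sign-choice step: while the choice $\epsilon = 1$, $\eta = \mathrm{sgn}(\sigma^\perp_{2k}(2n))$ handles the regime $\sigma_{2k} \ge 0$ cleanly (directly yielding $\sigma_{2k}(N) = |\sigma_{2k}| + |\sigma^\perp_{2k}(2n)|$), the regime $\sigma_{2k} < 0$ and the requirement that one pair $(\epsilon, \eta)$ works for \emph{all} $k$ simultaneously are more delicate. One may need to apply the Fundamental Lemma to several auxiliary matrices (one per relevant sign pattern) and combine them through the convexity of the volume functional, or to work instead with a refined PSD/Cauchy--Binet estimate that bypasses the column-replacement construction altogether.
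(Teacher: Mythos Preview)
Your volume-comparison step is incorrect, and this is a genuine gap rather than a detail. You assert that because the columns of $N=[a_1,\dots,a_{2n-1},\,\epsilon a_{2n}+\eta b]$ lie in the column span of $\tilde M=[A\mid b]$ one has $NN^T\preceq \tilde M\tilde M^T$, hence $\vol(N)\le\vol(\tilde M)$. Column-span containment does not imply a PSD bound: writing $N=\tilde M P$ with $P$ the $(2n{+}1)\times 2n$ matrix whose columns are $e_1,\dots,e_{2n-1},\,\epsilon e_{2n}+\eta e_{2n+1}$, the block $\bigl(\begin{smallmatrix}1&\epsilon\eta\\ \epsilon\eta&1\end{smallmatrix}\bigr)$ in $PP^T$ has an eigenvalue equal to $2$, so $PP^T\not\preceq I$. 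In fact the desired conclusion $\vol(N)\le\vol(\tilde M)$ can fail outright. For $n=1$, take $a_1=0$ and $a_2=b=(1,0)^T$ with $\epsilon=\eta=1$; then $\det(I_2+\tilde M\tilde M^T)=3$ while $\det(I_2+NN^T)=5$, so $\vol(N)>\vol(\tilde M)$. Any rescue by normalising the last column of $N$ (say dividing by $\sqrt 2$) spoils the identity $\sigma_{2k}(N)=\sigma_{2k}+\eta\,\sigma_{2k}^\perp(2n)$ that your second step relies on. The sign issue you flag is also real, but the volume inequality is the more fundamental obstruction.

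The paper avoids the column-replacement idea entirely. It pads $(a_{AB})$ with an extra zero row and column to obtain a $(2n{+}2)\times(2n{+}2)$ matrix $(b_{AB})$ with the same volume, brings $(b_{AB})$ to (complex) upper-triangular form, and compares with the diagonal matrix $D$ built from the moduli of its eigenvalues. The two inequalities $\vol(b_{AB})\ge\vol(D)$ (off-diagonal entries only add squared minors) and $\sigma_{2k}(D)\ge|\sigma_{2k}(b_{AB})|$ reduce the problem to the diagonal case~\eqref{vol-comp}; the quantity $|\sigma_{2k}|+|\sigma_{2k}^{\perp}(2n)|$ then appears after dropping the pieces $\sigma_{2k}^{\perp}(l)$ for $1\le l\le 2n-1$. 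The essential move you are missing is to pass through the \emph{eigenvalues} of the full $(2n{+}2)$-matrix rather than to force two columns of $\tilde M$ into one.
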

\begin{proof}
We define a $(2n+2)\times (2n+2)$ matrix $(b_{AB})$ by adding to $(a_{AB})$ a column and a row of zeros, 
\[
(b_{AB}) = 
  \left(\begin{array}{cccc|c}
    \multicolumn{3}{c}{\multirow{3}{*}{\raisebox{2pt}{$(a_{ij})$}}}     & a_{1\, 2n+1}  & 0         \\ 
    &       &            & {\vdots} &  {\vdots}  \\
    &       &            & a_{2n\, 2n+1} & 0          \\ 
    0      & \cdots & 0  & 0 & 0\\
    \hline
    0      & \cdots & 0  & 0 & 0
  \end{array}\right),
\]
so
$$\vol(b_{AB}) = \vol(a_{AB}) = \sqrt{\det({\rm Id} + (\nabla \vec{v})^* (\nabla \vec{v}))}.$$

By changing the basis, we can write $(b_{AB})$ as a upper triangular matrix, having its eigenvalues in the main diagonal (some of them possibly complex) 
\[(b_{AB}) = 
    \left(
    \begin{array}{cccccccccc}
    \lambda_1  &\ast & \cdots &&&&&&\cdots& \ast \\
    0  & \ddots    &   & &&&&&&\vdots\\
    \vdots  &  &\ \lambda_r & \ast & \cdots  \\
      &  & 0   &             \\
      & & \vdots & & x_1 & -y_1 &\ast & \cdots \\
      & & & & y_1&\ x_1\\
      & & & & 0& & \ddots &&&\vdots\\
      & & & & \vdots & & & \ddots & & \ast \\
     \vdots & & & & & & & & x_s & -y_s\\
     0 &\cdots & & & & & \cdots& 0&y_s &\ x_s
    \end{array}
    \right).
\]  

In general, $(a_{ij})$ is not a symmetric matrix, since $\vec{v}^{\perp}$ is not necessarily integrable. Thus, even though $(b_{AB})$ is possible a non-diagonal matrix, it has at least two zero eigenvalues, say $\lambda_1$ and $\lambda_2$,
and this fact plays a role when counting its elementary symmetric functions. 
If we define $D = \text{diagonal}(0,0,|\lambda_3|,\dots, \dots,|\lambda_r|,\sqrt{x_1^2 + y_1^2}, \sqrt{x_1^2 + y_1^2}, \dots, \sqrt{x_s^2 + y_s^2}, \sqrt{x_s^2 + y_s^2})$, then \ref{vol-comp} holds for this diagonal matrix. Summation goes up to $n$ instead of $n+1$ simply because $D$ is equivalent to a $2n\times 2n$ matrix. The fact that $(b_{AB})$ has elements above its main diagonal implies that $\vol(b_{AB}) \geq \vol(D)$. Since $D$ has nonnegative entries, $\sigma_{2k}(D) \geq \sigma_{2k}((b_{AB}))$ (cf. \cite{BCN}, Sections 3 and 4). Therefore omitting the symmetric functions $\sigma_{2k}^{\perp}(l)$, for $1\leq l\leq 2n-1$ produces the desired inequality
\begin{equation}
\vol(b_{AB})\geq \sum_{k=0}^{n} {n \choose k}{2n \choose 2k}^{-1} \sigma_{2k}(b_{AB}) \geq \sum_{k=0}^{n} {n \choose k}{2n \choose 2k}^{-1} \left(\sigma_{2k} +  \sigma^{\perp}_{2k}(2n)\right). \nonumber
\end{equation}

\end{proof}

%%%%%%%%%%%%%%%%%%%%%%%%%%%%%%%%%%%%%%%%%%%%%%%%%%%%%%%%
\subsection{Proof of theorem A}
\label{proofs}
We split the integral \ref{volume} on $M$ as an integral on a parallel $\mathbb{S}^{2n}_{\theta}$ of latitude $\theta \in (-\frac{\pi}{2}, \frac{\pi}{2})$, and a integral on $\theta$ itself,
$$\vol(\vec{v}) = \int_M \sqrt{\det({\rm Id} + (\nabla \vec{v})^* (\nabla \vec{v}))} \nu_M = \int_{-\frac{\pi}{2}}^{\frac{\pi}{2}}\left(\int_{\mathbb{S}^{2n}_{\theta}}\sqrt{\det({\rm Id} + (\nabla \vec{v})^* (\nabla \vec{v}))}\nu_{\mathbb{S}^{2n}_{\theta}}\right)d\theta.$$
From equation \ref{sigmas-volume}, 
$$\vol(\vec{v}) \geq \sum_{k=0}^{n} {n \choose k}{2n \choose 2k}^{-1}  \int_{-\frac{\pi}{2}}^{\frac{\pi}{2}} \left(\int_{\mathbb{S}^{2n}_{\theta}}\left(|\sigma_{2k}| +  |\sigma^{\perp}_{2k}(2n)|\right)\nu_{\mathbb{S}^{2n}_{\theta}}\right)d\theta$$
Since $\sin$ and $\cos$ are bounded,  
\begin{eqnarray}
\sum_{k=0}^{n}\frac{{n\choose k}}{{2n\choose 2k}}\left(\sin\alpha\,\sigma_{2k} + \cos\alpha\, \sigma^{\perp}_{2k}(2n)\right)
&\leq&\sum_{k=0}^{n}\frac{{n\choose k}}{{2n\choose 2k}}|\sigma_{2k}| + \sum_{k=0}^{n}\frac{{n\choose k}}{{2n\choose 2k}}\left|\sigma^{\perp}_{2k}(2n)\right|,\nonumber
\end{eqnarray}
and then, from equations \ref{pullback-euler} and \ref{st-euler},
$$\vol(\vec{v}) \geq \frac{\vol(\mathbb{S}^{2n})}{2} \int_{-\frac{\pi}{2}}^{\frac{\pi}{2}}\int_{ \mathbb{S}^{2n}_{\theta}} \iota^*(\mathcal{E}(\vec{v}^{\perp})) = \frac{\vol(\mathbb{S}^{2n})}{2}\left(  \int_{-\frac{\pi}{2}}^{0}\int_{\mathbb{S}^{2n}_{-\frac{\pi}{2} +\epsilon}} \iota^*(\mathcal{E}(\vec{v}^{\perp})) + \int_{0}^{\frac{\pi}{2}}\int_{\mathbb{S}^{2n}_{\frac{\pi}{2} - \epsilon}} \iota^*(\mathcal{E}(\vec{v}^{\perp}))\right)
$$
Therefore, 
$$\vol(\vec{v}) \geq\frac{\pi}{4}\vol(\mathbb{S}^{2n})\left(|I_{\vec{v}}(p)| + |I_{\vec{v}}(-p)| \right),$$
which proves theorem A. 

\subsection{A modest extension to arbitrary isolated singularities: proof of theorem B}
For every $p_i$, $1\leq i\leq m$, we can take the exponential map on $T_{p_i}M$ and find a real number $\theta_i$ such that a geodesic sphere  $S^{2n}_{\theta_i}$ is the boundary of a geodesic ball in $M^{2n+1}$, centered in $p_i$ and containing one singularity, namely $p_i$. 

Given $\epsilon_i > 0$ smaller than $\theta_i$, we build an annulus region $A_{\theta_i,\,\epsilon_i}^{2n}$ of dimension $2n$, with boundary $S^{2n}_{\epsilon_i}\cup S^{2n}_{\theta_i}$. Figure \ref{fig:variosb} illustrates the idea when we restrict ourselves to the case $M = \mathbb{S}^{2n+1}$. We proceed as in subsection \ref{poincare-index}. 

We merely consider that 
$$
\int_{M}\sqrt{\det({\rm Id} + (\nabla \vec{v})^* (\nabla \vec{v}))}\geq \sum_{i} \int_{S^{2n}_{\theta_i}} \sqrt{\det({\rm Id} + (\nabla \vec{v})^* (\nabla \vec{v}))}
$$
In this case, inequality \ref{sigmas-volume} still holds. Therefore,
$$\vol(\vec{v}) \geq \frac{\vol(\mathbb{S}^{2n})}{2} \sum_{i=1}^{m} \int_{ S^{2n}_{\theta_i}} \iota^*(\mathcal{E}(\vec{v}^{\perp})) = \frac{\vol(\mathbb{S}^{2n})}{2}\sum_{i=1}^{m}|I_{\vec{v}}(p_i)|$$

\begin{figure}
\centering
\includegraphics[width=0.4\linewidth]{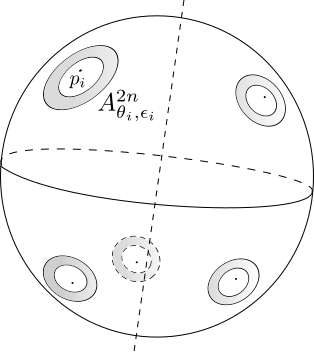}
\caption{A sphere with various isolated points, each one having a small annulus region around it.}
\label{fig:variosb}
\end{figure}

\section{Concluding remarks}
Even though, compared to theorem A, the lower bound found in \ref{lower-arb} is not sharp when $m=2$ and $M = \mathbb{S}^{2n+1}\backslash \{\pm p \}$, it presents a lower value for vector fields having two singularities in a random position, rather than on antipodal points. 

Additionally, as discussed in \cite{chacon-nunes} for the energy functional, given a number (greater than two) of isolated singularities, it is possible to find a unit vector field having these singularities and with volume arbitrarily close to the volume of the radial vector field. This may be done by the following argument: put two singularities in antipodal points $\pm p$ and every remain singularity in a neighborhood near the south pole $-p$, for example. Outside this neighborhood, take the radial vector field coming from $p$ and inside it one can take any vector field preserving the indices that were established in the beginning. By gluing those two parts together, one can obtain a vector field such that its volume is close to the volume of $V_R$. This is possible since the smaller the neighborhood, the smaller the volume.  

Theorem B represents a fair topological step towards a more general geometric question: is it possible to determined a unit vector field of minimum volume on a Riemannian manifold without a subset of singularities in a fixed configuration?

\end{document}